\numberwithin{equation}{section}
\definecolor{Arancio}{cmyk}{0,0.61,0.87,0}
\definecolor{blus}{RGB}{0,102,204}
\newcommand{\brd}[1]{\mathbb{#1}}
\newcommand{\R}{\brd{R}}
\newcommand{\be}{\begin{equation}}
\newcommand{\ee}{\end{equation}}
\newcommand{\loc}{{\text{\tiny{loc}}}}
\newtheorem{teo}{Theorem}[section]
\newtheorem{Lemma}[teo]{Lemma}
\newtheorem{Theorem}[teo]{Theorem}
\theoremstyle{definition}
\newtheorem{remark}[teo]{Remark}
\newtheorem*{problem}{Problem}
\newtheorem{conjecture}[teo]{Conjecture}
\begin{document}

\subjclass[2020] {30C20, 31A05, 35J25, 42B37}
\keywords{Unique continuation, conformal mapping, harmonic measure, chord arc domain, Dini mean oscillation}

\title[Boundary unique continuation in planar domains by conformal mapping]
{Boundary unique continuation in planar domains\\ by conformal mapping}
\date{\today}

\author{Stefano Vita}

\address[S. Vita]{Dipartimento di Matematica ``F. Casorati"
\newline\indent
Universit\`a di Pavia
\newline\indent
Via Ferrata 5, 27100, Pavia, Italy}
\email{stefano.vita@unipv.it}

\maketitle

\begin{abstract}
Let $\Omega\subset\R^2$ be a chord arc domain. We give a simple proof of the the following fact, which is commonly known to be true: a nontrivial harmonic function which vanishes continuously on a relatively open set of the boundary cannot have the norm of the gradient which vanishes on a subset of positive surface measure (arc length). This result is conjectured to be true in higher dimensions by Lin, in Lipschitz domains. Let now $\Omega\subset\R^2$ be a $C^1$ domain with Dini mean oscillations. We prove that a nontrivial harmonic function which vanishes continuously on a relatively open subset of the boundary $\partial\Omega\cap B_1$ has a finite number of critical points in $\overline\Omega\cap B_{1/2}$. The latter improves some recent results by Kenig and Zhao. Our technique involves a conformal mapping which moves the boundary where the harmonic function vanishes into an interior nodal line of a new harmonic function, after a further reflection. Then, size estimates of the critical set - up to the boundary - of the original harmonic function can be understood in terms of estimates of the \emph{interior} critical set of the new harmonic function and of the critical set - up to the boundary - of the conformal mapping.
\end{abstract}

\section{Introduction}
Let $\Omega$ be a domain in the plane $\R^2$. In this paper we are concerned with the local behaviour of planar harmonic functions in $\Omega$ near a given open piece of the boundary where they vanish continuously; that is, we consider weak solutions to
\begin{equation}\label{eq}
\begin{cases}
\Delta u=0 &\mathrm{in} \ \Omega\cap B_1,\\
u=0 &\mathrm{on} \ \partial\Omega\cap B_1.
\end{cases}
\end{equation}
Here we assume that $0\in\partial\Omega$, $z=(x,y)\in\R^2$, $B_1=\{|z|<1\}$ and $\Omega\cap B_1$ is simply connected. The aim of our work is to provide a conformal mapping $\Theta$ which locally moves the above boundary problem into 
\begin{equation*}\label{eq2}
\begin{cases}
\Delta U=0 &\mathrm{in} \ \{Y>0\}\cap B_1,\\
U=0 &\mathrm{on} \ \{Y=0\}\cap B_1.
\end{cases}
\end{equation*}
Here $u=U\circ\Theta$ and $Z=(X,Y)=\Theta(x,y)$. After an odd reflection of $U$ across the line $\{Y=0\}$ (Schwarz reflection principle), one ends up with a harmonic function in a ball. At the end of this procedure, the map $\Theta$ turned the original boundary $\partial\Omega$ into an interior nodal line of $U$
$$\Theta(\partial\Omega\cap B_1)\subset\{Y=0\}.$$
Then, the boundary behaviour of the solution $u$ can be understood locally in terms of the \emph{interior} behaviour of the new harmonic function $U$ and the boundary behaviour of the conformal mapping $\Theta$. This transformation provides solutions to classical problems in boundary unique continuation.
\subsection{Boundary unique continuation in chord arc domains}
\emph{Boundary unique continuation} typically concerns the following question: given a nontrivial solution $u$ of \eqref{eq}, and depending on the regularity of the domain $\Omega\subset\R^n$ with $n\geq2$, how big can the singular set $S(u)=\{u=|\nabla u|=0\}$ be - whenever it makes sense - restricted to the boundary? Similar questions can be raised for the full critical set $C(u)=\{|\nabla u|=0\}$ inside the domain and up to the boundary. In this context, a famous problem was proposed by Lin \cite{Lin91}.

\begin{conjecture}\label{conjecture}
Let us consider a harmonic function $u$ in a Lipschitz domain $\Omega$ in $\R^n$ vanishing continuously on a relatively open subset $V$ of the boundary $\partial\Omega$. Suppose that the normal derivative $\partial_\nu u$ vanishes in a subset of $V$ with positive surface measure. Then $u\equiv0$.
\end{conjecture}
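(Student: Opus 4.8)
The plan is to attack Conjecture \ref{conjecture} by reducing the boundary problem to an \emph{interior}, quantitative strong unique continuation statement, in the same spirit as the conformal construction of this paper but carried out in $\R^n$. First I would localize: choosing a point $x_0\in V$ that is a Lebesgue density point of the set $\{\partial_\nu u=0\}\subset V$, it suffices to contradict $\partial_\nu u$ vanishing on a subset of positive $\mathcal{H}^{n-1}$-measure in an arbitrarily small neighbourhood of $x_0$. Flattening $V$ near $x_0$ by a bi-Lipschitz map $\Phi$ sending the boundary piece to $\{x_n=0\}$, the function $v=u\circ\Phi^{-1}$ solves a divergence-form equation $\operatorname{div}(A\nabla v)=0$ with $A$ symmetric, uniformly elliptic, and determined by $D\Phi$; the Dirichlet condition becomes $v=0$ on $\{x_n=0\}$, while the hypothesis becomes $\partial_{x_n}v=0$, up to a nondegenerate factor, on a set $E\subset\{x_n=0\}$ of positive surface measure. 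The goal is then to show that $v$ vanishing together with its conormal derivative on such an $E$ forces $v\equiv0$, hence $u\equiv0$.

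The engine of the argument would be an Almgren--Garofalo--Lin frequency function centered at $x_0$ and adapted to the half-space, whose almost-monotonicity yields a doubling inequality up to the flat boundary, $\int_{B_{2r}}v^2\leq C\int_{B_r}v^2$. By the Han--Hardt--Lin and Naber--Valtorta theory, doubling forces the boundary singular set $\{v=|\nabla v|=0\}$ restricted to $\{x_n=0\}$ to have Hausdorff dimension at most $n-2$, so that it carries zero $\mathcal{H}^{n-1}$-measure; since $E$ sits inside this set and has positive measure, we reach a contradiction. (Equivalently, one could extend $v$ oddly and $A$ by even reflection across $\{x_n=0\}$ and argue in the interior, as the conformal Schwarz reflection does in the plane; but in dimension $n\geq3$ the reflected coefficients generally develop a jump across the interface unless the mixed entries $A_{in}$ vanish there, so the boundary-frequency route is cleaner.) This scheme is rigorous precisely when $\partial\Omega$ is $C^1$ with Dini mean oscillation, for then $A$ is Dini-continuous and the frequency method of Garofalo--Lin and Adolfsson--Escauriaza--Kenig applies verbatim.

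The hard part --- indeed the reason the statement remains a conjecture --- is that for a merely \emph{Lipschitz} domain the flattened matrix $A$ is only bounded and measurable, and for such operators both the frequency monotonicity and strong unique continuation itself \emph{fail} in dimension $n\geq3$ (the Pli\'s--Miller and Mandache type counterexamples). Thus one cannot treat $A$ as an arbitrary $L^\infty$ matrix. The plan would be to keep the boundary term explicit in the Rellich--Pohozaev identity underlying the frequency function, namely $\int_{\partial\Omega}(x\cdot\nu)\,\abs{\partial_\nu u}^2$: for convex and $C^1$-Dini domains this term has a favourable sign that drives the monotonicity, whereas for a general Lipschitz graph it does not, and the crux is to control its bad part using the rigid structure of the coefficients --- they depend only on the single Lipschitz function defining the graph and its gradient, the operator being a genuine tangential perturbation of a constant-coefficient one --- rather than discarding it. I expect this sign-and-structure issue in the boundary term of the frequency identity, equivalently the failure of quantitative unique continuation for the associated anisotropic $L^\infty$ operator, to be the decisive obstacle: it is exactly what separates the known Dini-type results, recovered in the plane by the conformal mapping of this paper, from Lin's conjecture in full generality.
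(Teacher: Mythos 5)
You have not proved the statement, and neither does the paper: this is Conjecture \ref{conjecture}, which the paper explicitly records as open in general (``The conjecture in its generality is still open''). The paper's actual contribution is the planar analogue, Theorem \ref{teo1} for chord arc domains, proved by a route entirely disjoint from yours: a hodograph conformal map $\Theta=(\overline v,v)$, built from an auxiliary positive harmonic function $v$ vanishing on $\partial\Omega$ with no interior critical points (Lemma \ref{lem:1}, via \cite{Ale86}) and its harmonic conjugate, straightens $\partial\Omega$ into the line $\{Y=0\}$; after Schwarz reflection, $C(u)$ is controlled by the interior critical set of the reflected harmonic function (isolated points) together with $C(\Theta)=\{|\nabla v|=0\}$, which has zero arc length because $d\omega=|\nabla v|\,d\sigma$ $\sigma$-a.e.\ \cite{LewNys12} and harmonic measure is mutually absolutely continuous with arc length on chord arc boundaries \cite{Lav36}. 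No frequency function, doubling inequality, or Naber--Valtorta covering appears anywhere in that argument; the two-dimensional rigidity (existence of a critical-point-free conjugate pair, Schwarz reflection) does all the work, which is precisely why the method does not extend to $n\geq3$.

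As for your program: you correctly identify, and candidly concede, that its engine --- almost-monotonicity of an Almgren-type frequency, hence doubling, up to a merely Lipschitz boundary where the flattened matrix $A$ is only $L^\infty$ --- is exactly the open problem; everything downstream (the measure bound on the boundary singular set, the contradiction at a density point of $E$) is conditional on it, so the ``proof'' reduces to restating the conjecture's difficulty. Two framing claims also need correction. First, the scheme is not rigorous ``precisely when'' $\partial\Omega$ is $C^{1,\mathrm{DMO}}$: it succeeds in convex Lipschitz domains \cite{AdoEscKen95}, quasiconvex Lipschitz domains \cite{Cai24}, and in $C^1$ domains and Lipschitz domains with small constant \cite{Tol23}, where the flattened coefficients carry no Dini modulus and where Tolsa's argument is in fact not a frequency-monotonicity argument --- so the dichotomy ``Dini or nothing'' you draw is inaccurate, and a successful attack on the full conjecture need not pass through your Rellich--Pohozaev sign analysis. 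Second, in a Lipschitz domain $\partial_\nu u$ on $V$ must be interpreted from the outset as a non-tangential limit existing $\sigma$-a.e.\ (as the paper does for $\nabla v$ via \cite{LewNys12}); your flattening step presupposes this silently. Finally, the full Naber--Valtorta measure estimate on the singular set is heavier machinery than the conjecture requires: the classical conditional route (as in \cite{AdoEsc97,KukNys98,KenZha22a}) is doubling at a density point of $E$, then vanishing to infinite order, then boundary strong unique continuation --- though both routes founder on the same missing monotonicity you named.
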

The validity of the result above was first established in $C^{1,1}$ domains \cite{Lin91}, convex Lipschitz domains \cite{AdoEscKen95}, $C^{1,\alpha}$ and $C^{1,\mathrm{Dini}}$ domains \cite{AdoEsc97,KukNys98}, $C^1$ domains and Lipschitz domains with small Lipschitz constant \cite{Tol23}, quasiconvex Lipschitz domains \cite{Cai24}. We would like to mention also \cite{Mcc23,Gal23} for estimates of singular and critical sets in case of Lipschitz convex domains and Lipschitz domains with small Lipschitz constant, respectively. The conjecture in its generality is still open.\\

The two dimensional case is commonly known to be true, even without requiring that $u$ vanishes on the given open piece of the boundary and in general simply connected domains with rectifiable boundary. The known argument uses the F. and M. Riesz theorem, applied to the conformal mapping from the disc, and the subharmonicity of $\log|\nabla u|$, see discussions in \cite{AdoEsc97,Wol91}.

Among the other motivations, the paper aims to give an alternative proof of Conjecture \ref{conjecture} in the two dimensional case and when $\Omega$ is a chord arc domain. In two dimensions, a bounded chord arc domain is a Jordan domain for which the boundary is locally rectifiable and there exists a constant $\lambda>0$ such that
$$\sigma(\gamma(z_1,z_2))\leq\lambda|z_1-z_2|,\qquad\forall z_1,z_2\in\partial\Omega,$$
where $\gamma(z_1,z_2)$ is the shortest arc in the boundary connecting $z_1$ and $z_2$, and $\sigma(\gamma(z_1,z_2))$ is its length. In general, a chord arc domain in $\R^n$ is a non-tangentially accessible (NTA) domain \cite{JerKen82} whose boundary is Ahlfors-David regular, i.e. the surface measure on boundary balls of radius $r$ grows like $r^{n-1}$. Our first result can be stated as follows

\begin{Theorem}\label{teo1}
Let us consider a harmonic function in a chord arc domain $\Omega$ in $\R^2$ vanishing continuously on a relatively open subset $V$ of the boundary $\partial\Omega$. Suppose that the norm of the gradient $|\nabla u|$ vanishes in a subset of $V$ with positive arc length. Then $u\equiv0$. Actually, given a chord arc domain $\Omega\subset\R^2$ and a nontrivial solution $u$ to \eqref{eq}, then
\begin{equation*}\label{Sn-1}
\mathcal H^1(C(u)\cap \overline\Omega\cap B_{1})=0.
\end{equation*}
\end{Theorem}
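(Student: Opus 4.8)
The plan is to transport the problem to the half-plane by a conformal map, where the relevant critical set becomes \emph{interior} and manifestly discrete, letting the chord arc geometry be absorbed entirely into the boundary regularity of the map. Since $\Omega\cap B_1$ is simply connected, the Riemann mapping theorem provides a conformal homeomorphism $\Theta$ from $\Omega\cap B_1$ onto a subdomain of the upper half-plane $\{Y>0\}$ carrying the relatively open piece $V$ of $\partial\Omega$ where $u$ vanishes into a segment of $\{Y=0\}$. Setting $U=u\circ\Theta^{-1}$, conformal invariance of the Laplacian in the plane makes $U$ harmonic with $U=0$ on that segment; by the Schwarz reflection principle $U$ extends to a nontrivial harmonic function across $\{Y=0\}$, so that $\Theta(V)$ lies on an interior nodal line of the reflected $U$. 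This last point is exactly what turns the \emph{boundary} critical points of the original problem into \emph{interior} critical points of $U$.

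The first reduction is that the interior critical set $C(U)$ is discrete. Writing $U=\operatorname{Re}F$ for a holomorphic $F$, one has $|\nabla U|=|F'|$, and since $U$ is nonconstant $F'$ is a holomorphic function not identically zero, whence its zero set---that is $C(U)$---is discrete and in particular $\mathcal H^1(C(U))=0$. The second, decisive, ingredient is the chain rule through the conformal map: because $\Theta$ is conformal, $|\nabla u(z)|=\bigl|(\nabla U)(\Theta(z))\bigr|\,|\Theta'(z)|$, so that
\begin{equation*}
C(u)=\Theta^{-1}\bigl(C(U)\bigr)\cup\{\Theta'=0\}.
\end{equation*}
In the interior $\Theta$ is a homeomorphism and $\Theta'$ is a nonvanishing-off-a-discrete-set holomorphic function, so both sets on the right meet $\Omega$ in discrete sets; hence $C(u)\cap\Omega$ is countable.

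The whole difficulty is therefore pushed to the boundary and to the behaviour of $\Theta'$ there, and this is precisely where the chord arc hypothesis enters, through the classical Hardy-space theory of conformal maps. For a rectifiable Jordan domain the Riemann map $\Theta^{-1}$ from the half-plane has derivative in $H^1$ with nontangential boundary values existing and nonzero almost everywhere with respect to arc length (F.\ and M.\ Riesz), and for a chord arc domain harmonic measure is $A_\infty$ with respect to arc length, so that $\Theta$ and $\Theta^{-1}$ carry arc-length-null sets to arc-length-null sets in \emph{both} directions. Consequently $\{\Theta'=0\}\cap V$ has zero arc length, and $\Theta$ maps any subset of $V$ of positive length onto a subset of $\{Y=0\}$ of positive length. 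I expect verifying these boundary statements---that the map neither creates nor destroys positive-measure sets on the boundary and that its derivative has a.e.\ finite nonzero angular limits---to be the main obstacle, since it is the only place where the geometry of $\partial\Omega$ is genuinely used.

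Granting this, both assertions follow at once. If $|\nabla u|$ vanished on a set $E\subset V$ with $\mathcal H^1(E)>0$, then after discarding the null set $\{\Theta'=0\}\cap V$ we would have $(\nabla U)\circ\Theta=0$ on a subset of $E$ of positive length, so $\Theta$ would send it into $C(U)\cap\{Y=0\}$ while preserving positive length, contradicting the discreteness of $C(U)$ unless $U\equiv0$, that is $u\equiv0$. For the quantitative statement, take $u$ a nontrivial solution to \eqref{eq}, so that $V=\partial\Omega\cap B_1$. Then $C(u)\cap\overline\Omega\cap B_1$ is the union of the countable interior part already controlled and the boundary part, and the latter is contained in $\Theta^{-1}(C(U))\cup\bigl(\{\Theta'=0\}\cap V\bigr)$, a countable set together with an arc-length-null set; therefore $\mathcal H^1\bigl(C(u)\cap\overline\Omega\cap B_1\bigr)=0$.
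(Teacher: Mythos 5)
Your proposal is correct in its essential structure, but it reaches the result by a genuinely different route from the paper. You take the abstract Riemann map of $\Omega\cap B_1$ onto the half-plane and control its boundary behaviour by the classical Hardy-space package: $(\Theta^{-1})'\in H^1$, finite nonzero angular limits $\sigma$-a.e. (F.\ and M.\ Riesz plus $\log|(\Theta^{-1})'|\in L^1$), and mutual absolute continuity of harmonic measure and arc length. The paper instead \emph{constructs} the conformal map as a hodograph transform $\Theta=(\overline v,v)$, where $v$ is a positive harmonic function vanishing on $\partial\Omega\cap B_1$ chosen (via Alessandrini's unimodal-data argument) to have no interior critical points; injectivity is proved by hand through monotonicity of $\overline v$ along level curves of $v$, and the boundary degeneracy is controlled through $|\mathrm{det}\,D\Theta|=|\nabla v|^2$ together with the Lewis--Nystr\"om theorem ($\nabla v$ has nontangential limits $\sigma$-a.e.\ and $d\omega=|\nabla v|\,d\sigma$) and Lavrentiev's theorem. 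These are two dressings of the same underlying fact --- the density $d\omega/d\sigma$ is positive and finite $\sigma$-a.e., and indeed your $|\Theta'|$ is the paper's $|\nabla v|$ --- and both proofs replace the subharmonicity-of-$\log|\nabla u|$ step of the classical argument by Schwarz reflection plus discreteness of the interior critical set of $U$, which is exactly what the hypothesis $u=0$ on $V$ makes available. What your route buys is brevity and generality (it works for any rectifiable Jordan domain, as the paper itself acknowledges the result is classically known in that setting); what the paper's route buys is a constructive map that avoids uniformization and, crucially, upgrades to a $C^1$ diffeomorphism up to the boundary in the $C^{1,\mathrm{DMO}}$ setting of Theorem 1.6, which the abstract Riemann map would not deliver directly. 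The points you flag as ``the main obstacle'' are indeed the only nontrivial verifications: that $\Omega\cap B_1$ can be replaced by a Jordan subdomain with rectifiable boundary containing a neighbourhood of $V$ (the paper localizes to a set $E_r$ for the same reason), that nontangential approach regions are preserved under the conformal map so that the identity $|\nabla u|=|(\nabla U)\circ\Theta|\,|\Theta'|$ passes to the boundary in the sense of nontangential limits, and the a.e.\ existence of those limits; all are standard for chord arc (indeed NTA) domains, so there is no genuine gap.
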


The idea of the proof we propose here is the following: after composing with the conformal mapping $\Theta$, one has
\begin{equation*}\label{count}
|\nabla u|^2=|\mathrm{det} D\Theta| \cdot |\nabla U|^2\circ\Theta,
\end{equation*}
where $D\Theta$ stands for the Jacobian of $\Theta$. Then, the critical set of $u$ is locally controlled in size by the critical set of $U$ and by the critical set of the conformal mapping
\begin{equation*}\label{CriticalTheta}
C(\Theta)=\{|\mathrm{det} D\Theta|=0\}.
\end{equation*}
Roughly speaking, the first is \emph{small}, being an \emph{interior} critical set - up to perform an odd reflection - of a harmonic function (it consists of a finite number of points), and the latter is concentrated along the boundary $\partial\Omega$ and has zero surface measure. The critical set of the conformal mapping we provide is in fact the critical set of an auxiliary positive harmonic function vanishing on $\partial\Omega$; that is, 
\begin{equation*}
|\mathrm{det} D\Theta|=|\nabla v|^2,\qquad \begin{cases}
\Delta v=0 &\mathrm{in} \ \Omega\cap B_1,\\
v>0 &\mathrm{in} \ \Omega\cap B_1,\\
v=0 &\mathrm{on} \ \partial\Omega\cap B_1.
\end{cases}
\end{equation*}

The function above can be easily constructed without interior critical points, following ideas in \cite{Ale86}, and this is crucial for having local invertibility of the conformal mapping.
Moreover, along the boundary, $|\nabla v|$ is $\sigma$-a.e. comparable to the density of the harmonic measure with respect to the surface measure $d\sigma$. Besides, the harmonic measure is mutually absolutely continuous with respect to $d\sigma$ \cite{Lav36}. The latter two facts imply that the critical set of $v$ along the boundary has zero surface measure. In the two dimensional case, by surface measure we mean the arc length, which corresponds to the one dimensional Hausdorff measure restricted to $\partial\Omega$, i.e. $\sigma=\mathcal H^1\llcorner\partial\Omega$.

\subsection{$(n-2)$ dimensional size control of singular and critical sets in $C^{1,\mathrm{DMO}}$ domains}

When the boundary is more regular, one may ask for stronger information on the size of singular and critical sets. That is, consider the following

\begin{problem}\label{p3}
Let $u$ be a nontrivial solution to \eqref{eq} in a domain $\Omega\subset\R^n$ with $n\geq2$. Identify the conditions on the boundary under which one has
\begin{equation}\label{Sn-2}
\mathcal H^{n-2}(S(u)\cap \overline\Omega\cap B_{1/2} )\leq C,
\end{equation}
or either
\begin{equation}\label{Cn-2}
\mathcal H^{n-2}(C(u)\cap \overline\Omega\cap B_{1/2})\leq C.
\end{equation}
\end{problem}

This problem was recently addressed by Kenig and Zhao in a series of papers \cite{KenZha22a,KenZha23,KenZha24}, using also techniques developed by Naber and Valtorta \cite{NabVal17b}. The property \eqref{Sn-2} holds true in $C^{1,\mathrm{Dini}}$ domains \cite{KenZha22a}, and counterexamples are provided below this threshold \cite{KenZha23}. Actually, in \cite{KenZha22a} the authors obtain upper estimates for the $(n-2)$ dimensional Minkowski content, which coincides with the $(n-2)$ dimensional Hausdorff measure in the present planar case. The property \eqref{Cn-2}, which is stronger than \eqref{Sn-2}, holds in $C^{1,\alpha}$ domains \cite{KenZha24}. The size bounds in \cite{KenZha22a,KenZha24} are uniform prescribing a bound on the Almgren frequency function of the solution at a macroscopic scale as well as a control over the $C^{1,\mathrm{Dini}}$ (respectively $C^{1,\alpha}$) character of the boundary parametrization.\\

Our conformal approach allows us to prove the stronger property \eqref{Cn-2}, again in two dimensions, any time the critical set of the conformal mapping has the suitable bound in measure
\begin{equation*}
\mathcal H^{0}(C(\Theta)\cap \overline\Omega\cap B_{1/2})\leq C.
\end{equation*}
Here $\mathcal H^0$ stands for the counting measure. Our focus here is not on establishing conditions on the boundary that yield the precise bound above, although many examples could be constructed. Instead, we concentrate our analysis on a set of hypothesis which implies an empty critical set, i.e.
\begin{equation*}
C(\Theta)\cap \overline\Omega\cap B_{1/2}=\emptyset.
\end{equation*}
With respect to our conformal mapping, this happens any time the following two conditions hold:
\begin{itemize}
\item[(P1)] any solution $u$ to \eqref{eq} belongs to $C^1_\loc(\overline\Omega\cap B_1)$ (and the same holds true for solutions having homogeneous Neumann boundary condition at $\partial\Omega$);
\item[(P2)] the Hopf lemma holds true: any solution $v$ to \eqref{eq} which is positive in $\Omega\cap B_1$ has $\partial_\nu v<0$ on $\partial\Omega\cap B_1$.
\end{itemize}

As we proved in \cite{DonJeoVit23}, the properties above hold true when the domain is $C^1$ with Dini mean oscillations ($C^{1,\mathrm{DMO}}$), and this is true in any dimension. This class of domains was recently introduced in \cite{DonJeoVit23} and strictly contains the $C^{1,\mathrm{Dini}}$ class, see Section \ref{s:DMO} for the precise definition. For reader's convenience, we would like to provide here a two dimensional example of a $C^{1,\mathrm{DMO}}$ local parametrization which fails to be $C^{1,\mathrm{Dini}}$: the domain is given, locally around $0\in\partial\Omega$, by
\begin{equation*}
\Omega\cap B_{1/2}=\{y>\varphi(x)\}\cap B_{1/2},\qquad \partial\Omega\cap B_{1/2}=\{y=\varphi(x)\}\cap B_{1/2},
\end{equation*}
with $z=(x,y)\in\R^{2}$ and
\begin{equation*}
\varphi(x)=\frac{x}{|\log |x||^{1/2}},\qquad |x|<1/2.
\end{equation*}
The following is our second result

\begin{Theorem}\label{t:DMO}
Let $n=2$, $\Omega$ be a $C^{1,\mathrm{DMO}}$ domain and $u$ be a nontrivial solution to \eqref{eq}. Then,
\begin{equation*}\label{B01}
\mathcal H^{n-2}(C(u)\cap \overline\Omega\cap B_{1/2})<\infty.
\end{equation*}
\end{Theorem}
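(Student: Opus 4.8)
The plan is to transfer the problem, through the conformal map $\Theta$, onto the interior critical set of a harmonic function, and to verify that $\Theta$ introduces no new critical points up to the boundary. Recall that $\Theta$ maps $\Omega\cap B_1$ conformally so that $u=U\circ\Theta$ with $U$ harmonic on $\{Y>0\}\cap B_1$, $U=0$ on $\{Y=0\}\cap B_1$, and
\begin{equation*}
|\nabla u|^2=|\mathrm{det}\,D\Theta|\cdot|\nabla U|^2\circ\Theta=|\nabla v|^2\cdot|\nabla U|^2\circ\Theta,
\end{equation*}
where $v=\mathrm{Im}\,\Theta$ is the positive harmonic function vanishing on $\partial\Omega\cap B_1$; the last equality uses the Cauchy--Riemann relations, which give $|\mathrm{det}\,D\Theta|=|\Theta'|^2=|\nabla v|^2$. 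Consequently, at every point where $|\mathrm{det}\,D\Theta|\neq0$ one has the equivalence $|\nabla u|=0\iff|\nabla U|\circ\Theta=0$, so that locally $C(u)=\Theta^{-1}(C(U))$.

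The first step is to rule out critical points of $\Theta$ in $\overline\Omega\cap B_{1/2}$. Following the construction recalled above (in the spirit of \cite{Ale86}), $v$ is built with no interior critical points, hence $|\nabla v|>0$ in $\Omega\cap B_1$. Since $\Omega$ is $C^{1,\mathrm{DMO}}$, property (P1) gives $v\in C^1_\loc(\overline\Omega\cap B_1)$, so $\nabla v$ extends continuously up to the boundary, while property (P2), the Hopf lemma, gives $\partial_\nu v<0$, hence $|\nabla v|>0$, on $\partial\Omega\cap B_1$. Combining these, $|\mathrm{det}\,D\Theta|=|\nabla v|^2>0$ on all of $\overline\Omega\cap B_{1/2}$, that is $C(\Theta)\cap\overline\Omega\cap B_{1/2}=\emptyset$, and $\Theta$ is a $C^1$ diffeomorphism onto its image up to the boundary on this set.

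The second step treats $C(U)$. Extending $U$ by odd reflection across $\{Y=0\}$ (Schwarz reflection principle) yields a function harmonic across the nodal line, hence harmonic in a full neighborhood of the compact set $\Theta(\overline\Omega\cap B_{1/2})$. The complex derivative $\partial_X U-i\,\partial_Y U$ is then holomorphic, and not identically zero because $u$, hence $U$, is nontrivial; its zeros are therefore isolated, so $C(U)$ is finite on $\Theta(\overline\Omega\cap B_{1/2})$. Pulling back through the diffeomorphism $\Theta$, the set $C(u)\cap\overline\Omega\cap B_{1/2}=\Theta^{-1}(C(U))$ is finite, i.e. $\mathcal H^{0}(C(u)\cap\overline\Omega\cap B_{1/2})<\infty$, which is the assertion since $n=2$.

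The main obstacle is the first step: guaranteeing $C(\Theta)=\emptyset$ up to the boundary. The interior absence of critical points of $v$ is geometric and robust, but the nonvanishing of $\nabla v$ on $\partial\Omega$ requires both that the gradient make sense there (boundary $C^1$ regularity) and that it not degenerate (Hopf). This is exactly where the $C^{1,\mathrm{DMO}}$ hypothesis enters, through (P1) and (P2) from \cite{DonJeoVit23}; below the Dini mean oscillation threshold either ingredient may fail, and with it the finiteness of the critical set.
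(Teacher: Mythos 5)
Your proposal is correct and follows essentially the same route as the paper: construct $v$ with no interior critical points, use (P1)--(P2) to get $|\nabla v|>0$ up to the boundary so that $C(\Theta)\cap\overline\Omega\cap B_{1/2}=\emptyset$, then transfer to the interior critical set of the odd reflection of $U$, which is finite because the zeros of the associated holomorphic derivative are isolated. The only cosmetic difference is that the paper keeps the term $\mathcal H^0(C(\Theta)\cap\overline{E_r})$ in the final estimate (which vanishes here anyway), while you observe directly that $C(u)=\Theta^{-1}(C(U))$ once $\det D\Theta\neq0$.
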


We believe that the above result is still valid in any dimension $n\geq2$. Let us stress the fact that Theorem \ref{t:DMO} is not in contradiction with \cite{KenZha23}, since the counterexamples proposed there \emph{do not see} our intermediate condition. We refer to Remark \ref{r:DMO} for a detailed explanation of this fact.\\

Finally, we would like to emphasize that hodograph conformal mappings, like the one we introduce in Section \ref{s:hodograph}, have been previously employed in other contexts such as the structure and regularity of nodal sets of harmonic functions, univalent $\sigma$-harmonic mappings, two-phase free boundary problems, and boundary Harnack principles on nodal domains, see respectively \cite{HarWin53,AleNes01,DePSpoVel24,TerTorVit24}.

\section{The construction of the conformal mapping in chord arc domains}\label{s:CAD}
The conformal mapping $\Theta$ we are going to construct is the same for proving both Theorem \ref{teo1} and Theorem \ref{t:DMO}. However, the construction in the first case is more delicate, since in chord arc domains one has to work with generalized gradients, defined as non-tangential limits, and the invertibility of the map is more subtle. The full section should be intended as the proof of Theorem \ref{teo1}.

\subsection{Chord arc domains}
In this brief subsection we would like to introduce the chord arc domains.
A NTA (non-tangentially accessible) domain $\Omega$ is one which enjoys an interior Harnack Chain condition, as well as interior and exterior Corkscrew conditions. This notion was introduced in \cite{JerKen82}. A chord arc domain is a NTA domain whose boundary is Ahlfors-David regular, i.e. the surface measure on boundary balls of radius $r$ grows like $r^{n-1}$. We refer to \cite{AzzHofMarNysTor17} for precise definitions and nice characterizations of chord arc domains. We also would like to refer to \cite{Lav36} for the proof of the mutual absolute continuity between arc length and harmonic measure in chord arc domains, see also \cite{Dah86,DavJer90,Sem90} for the result in any dimension and \cite{CapKenLan05,JerKen82b} for further references.
Finally, we refer to \cite{LewNys12} for useful considerations on the gradient of positive harmonic functions vanishing continuously on a relatively open subset of a chord arc boundary.
\subsection{Non-tangential limits}
Given a chord arc bounded domain $\Omega$ in $\R^n$ and the surface measure $\sigma$ on $\partial\Omega$, one has for $\sigma$-a.e. $z$ the existence of the tangent plane to $\partial\Omega$ in $z$ and of the outer unit normal vector $\nu(z)$.
Then, given any parameter $\alpha>0$, let us consider the non-tangential approach region to a point on the boundary $z\in\partial\Omega$

\begin{equation*}
\Gamma_\alpha(z)=\{\xi\in\Omega\, : \, (1+\alpha)d(\xi,\partial\Omega)>|z-\xi|\},
\end{equation*}
and, given a measurable function $w$ defined in $\Omega$, the non-tangential maximal function at the boundary point $z\in\partial\Omega$
$$\mathcal N_\alpha w(z)=\sup_{\xi\in\Gamma_\alpha(z)} |w|(\xi).$$
The choice of $\alpha$ does not play any role later on, so we can fix $\alpha=1$ and simply write $\Gamma(z)=\Gamma_1(z)$ and $\mathcal Nw(z)=\mathcal N_1w(z)$. Then, we say that $w$ converges non-tangentially to $f$ at $z\in\partial\Omega$ if
\begin{equation*}
\lim_{\xi\in \Gamma(z), \ \xi\to z}w(\xi)=f(z).
\end{equation*}

\subsection{Positive harmonic function vanishing on $\partial\Omega$ with no interior critical points}\label{s:distance}
The first step is the construction of a solution to
\begin{equation}\label{eqv}
\begin{cases}
\Delta v=0 & \mathrm{in} \ \Omega\cap B_1\\
v>0 & \mathrm{in} \ \Omega\cap B_1\\
v=0 & \mathrm{on} \ \partial\Omega\cap B_1,
\end{cases}
\end{equation}
with no interior critical points and simple level curves with endpoints on the boundary.

\begin{Lemma}\label{lem:1}
There exists a solution to \eqref{eqv}, positive and with no critical points in $\Omega\cap B_1$. Moreover, for every $\ell>0$, the level set $\{v=\ell\}\cap (\Omega\cap B_1)$ is either empty or consists of a single simple curve whose endpoints lie on $\partial(\Omega\cap B_1)$.
\end{Lemma}

\begin{proof}
The existence of such a function is given by the solvability of the Dirichlet problem for the Laplacian on the simply connected chord arc domain $\Omega\cap B_1$, which is regular for the solvability of the Dirichlet problem. Let us prescribe as Dirichlet data on $\partial(\Omega\cap B_1)$ a continuous nonnegative function which is positive and unimodal (with a unique maximum) on a relatively open simple arc $\gamma$ in $\partial B_1\cap\Omega$, and zero elsewhere. In other words, the unimodal prescribed function on $\gamma$ is monotone increasing in a first arc $\gamma_1$ and then decreasing in a second one $\gamma_2$ with $\gamma=\gamma_1\cup\gamma_2$ and $\gamma_1\cap\gamma_2=\{z_0\}$. Then, $v$ is continuous in $\overline{\Omega\cap B_1}$ and by the strong maximum principle $v$ is positive inside $\Omega\cap B_1$. Moreover, $v$ has no interior critical points in $\Omega\cap B_1$ (see e.g. \cite[Theorem 1.2]{Ale86} assuming the number of maxima is $N=1$). Moreover, since $v$ has no interior critical points, each level set $\{v=\ell\}$ is either empty in $\Omega\cap B_1$ or a union of regular curves. Closed components are excluded by the maximum principle. Hence, each connected component of $\{v=\ell\}$ must intersect the boundary. Finally, the unimodality of the boundary data implies that, for each $\ell>0$, if $\{v=\ell\}\cap (\Omega\cap B_1)\neq\emptyset$, there are exactly two boundary points where $v=\ell$, and therefore $\{v=\ell\}$ consists of a single simple curve connecting them.
\end{proof}

\subsection{The size of the singular set of positive harmonic functions at the chord arc boundary}
Now, being $v$ a nonnegative solution to \eqref{eqv} in a bounded chord arc domain, by \cite{Lav36} and \cite[Theorem 1]{LewNys12} we have that
\begin{itemize}
\item[(i)] the \emph{harmonic measure} $\omega$ is absolutely continuous with respect to $\sigma$ on $\partial\Omega\cap B_1$ and $d\omega\in A_\infty(\partial\Omega\cap B_1,d\sigma)$;
\item[(ii)] the limit
\begin{equation*}
\nabla v(z):=\lim_{\xi\in \Gamma(z), \ \xi\to z}\nabla v(\xi)
\end{equation*}
exists for $\sigma$-a.e. $z\in\partial\Omega\cap B_1$. Moreover, $\nabla v(z)=-|\nabla v(z)|\nu(z)$ where $\nu$ stands for the outer unit normal vector;
\item[(iii)] there exists $p>1$ such that $\mathcal N|\nabla v|\in L^p(\partial\Omega\cap B_1,d\sigma)$;
\item[(iv)] $d\omega=|\nabla v|d\sigma$ for $\sigma$-a.e. $z\in\partial\Omega\cap B_1$.
\end{itemize}

Summing up the information above, we have that $|\nabla v|$ can not vanish on a set of positive surface measure. Then, combining this information with the fact that $v$ has no interior critical points, we have
\begin{equation}\label{criticalv}
\mathcal H^1(C(v)\cap \overline\Omega\cap B_1)=0,
\end{equation}
with all the critical points (which are singular points) concentrated along the boundary.

\subsection{The harmonic conjugate}\label{s:conjugate}
Let us construct the harmonic conjugate of $v$ by solving
\begin{equation}\label{form}
\nabla\overline v=J\nabla v \quad\mathrm{in} \ \Omega\cap B_1,\qquad J=\begin{pmatrix}
    0     & 1 \\
    -1   &0
\end{pmatrix}.
\end{equation}
Here $J$ is the clockwise rotation matrix of angle $\pi/2$, such that $J^{-1}=J^T=-J$. The condition above corresponds to the Cauchy-Riemann equations. The form is closed in a simply connected domain, so \eqref{form} admits a solution. In particular $\overline v$ has the following properties
\begin{equation*}
\nabla v\cdot\nabla \overline v=0,\qquad |\nabla v|=|\nabla\overline v|,
\end{equation*}
and is harmonic in $\Omega\cap B_1$. Moreover, we can suppose that $\overline v(0)=0$, considering instead $\tilde v(z)=\overline v(z)-\overline v(0)$. Notice that also $\overline v$ admits a non-tangential extension on $\partial\Omega$ for $\sigma$-a.e. point on the boundary (always denoted by $\overline v$), and the same holds for its gradient, which belongs to $L^p(\partial\Omega\cap B_1,d\sigma)$.

\subsection{A hodograph conformal transformation and its invertibility}\label{s:hodograph}
Let us define the hodograph conformal mapping involving $v,\overline v$
\begin{equation*}\label{conformal}
\Theta(x,y)=(\overline v(x,y),v(x,y))=(X,Y),
\end{equation*}
with $\Theta(0)=0$.

\begin{Lemma}\label{lem:2}
The map $\Theta:\Omega\cap B_1\to\R^2$ is conformal.
\end{Lemma}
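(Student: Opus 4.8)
The plan is to identify $\R^2$ with the complex plane $\C$ via $z=x+iy$ and $Z=X+iY$, and to recognize the vector-valued map $\Theta=(\overline v,v)$ as the single complex function
\begin{equation*}
F(z)=\overline v(x,y)+i\,v(x,y),\qquad Z=F(z).
\end{equation*}
Conformality of $\Theta$ is then equivalent to $F$ being holomorphic with nonvanishing derivative, and I would establish this in two short steps: first verifying the Cauchy--Riemann equations, then ruling out critical points of $F$.

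For the first step, I would simply read off the Cauchy--Riemann equations from the defining relation \eqref{form}. Writing $\nabla\overline v=J\nabla v$ componentwise gives $\overline v_x=v_y$ and $\overline v_y=-v_x$, which are precisely the Cauchy--Riemann equations for the real part $\overline v$ and the imaginary part $v$ of $F$. At the level of the Jacobian this reads
\begin{equation*}
D\Theta=\begin{pmatrix}\overline v_x & \overline v_y\\ v_x & v_y\end{pmatrix}=\begin{pmatrix} v_y & -v_x\\ v_x & v_y\end{pmatrix}.
\end{equation*}
Setting $a=v_y$ and $b=v_x$, this matrix has the form of a real multiple of a rotation: it equals $\sqrt{a^2+b^2}$ times a rotation matrix whenever $(a,b)\neq(0,0)$. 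Thus $D\Theta$ is a nonnegative scalar multiple of an orthogonal matrix at every point, which is exactly the pointwise characterization of an orientation-preserving conformal linear map.

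For the second step, I would invoke Lemma~\ref{lem:1}: since $v$ has no critical points in $\Omega\cap B_1$, we have $|\nabla v|>0$ throughout, and therefore
\begin{equation*}
\det D\Theta=v_x^2+v_y^2=|\nabla v|^2>0,\qquad |F'|^2=\overline v_x^2+v_x^2=|\nabla v|^2>0.
\end{equation*}
Hence $D\Theta$ is a strictly positive scalar times a rotation at each point---equivalently $F$ is holomorphic with $F'\neq0$---so $\Theta$ is conformal and orientation preserving. I expect no genuine obstacle in this argument: beyond the purely algebraic identification with the Cauchy--Riemann system, the only analytic input is the absence of interior critical points, which is precisely what Lemma~\ref{lem:1} supplies. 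The genuinely delicate point, namely the global injectivity of $\Theta$ that is needed to treat it as a change of variables, is a separate matter handled in the invertibility part of this subsection and does not enter the pointwise conformality statement asserted here.
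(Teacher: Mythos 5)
Your proof is correct and follows essentially the same route as the paper: identify $\Theta$ with a complex function, verify the Cauchy--Riemann equations directly from the defining relation $\nabla\overline v=J\nabla v$, and use Lemma~\ref{lem:1} to rule out zeros of the derivative via $|F'|=|\nabla v|$. The only (cosmetic) difference is that you take $F=\overline v+iv$, consistent with $Z=X+iY$ and the stated convention for $J$, while the paper writes $f=v+i\overline v$; your explicit check that $D\Theta$ is a positive multiple of a rotation is a slightly more detailed version of the same argument.
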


\begin{proof}
The conformality of $\Theta$ should be understood as the conformality of the complex map $f(z)=v(z)+i\overline v(z)$. The latter is verified if and only if $f$ is holomorphic and its complex derivative is everywhere non-zero on $\Omega\cap B_1$ as a subset of $\mathbb C$. The first condition is due to the real analyticity of $v,\overline v$ in $\Omega\cap B_1$ and the validity of the Cauchy-Riemann equations, implied by the definition of the harmonic conjugate. The second condition follows by the following remark: the complex derivative satisfies
\[
|f'(z)| = |\nabla v(z)|.
\]
By Lemma \ref{lem:1}, $v$ has no interior critical points in $\Omega\cap B_1$. Then, $|\nabla v(z)|\neq 0$ in $\Omega\cap B_1$.
\end{proof}

Now, we need to prove injectivity of the map in $\Omega\cap B_1$. The Jacobian associated with $\Theta$ is given by
\begin{align*}
D\Theta=\begin{pmatrix}
    \partial_x\overline v       & \partial_y\overline v \\
    \partial_x v      & \partial_y  v
\end{pmatrix}, \qquad \mathrm{with}\quad |\mathrm{det} \, D\Theta|=|\nabla v|^2=|\nabla \overline v|^2.
\end{align*}
Hence, the fact that $v$ has no interior critical points implies the local invertibility of the map in $\Omega\cap B_1$. Then, the global injectivity of $\Theta$ in $\Omega\cap B_1$ is proved in the following lemma. 

\begin{Lemma}\label{lem:3}
The map $\Theta$ is injective in $\Omega\cap B_1$.
\end{Lemma}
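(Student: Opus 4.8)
The plan is to prove injectivity of $\Theta$ by exploiting the structure provided by Lemma \ref{lem:1}: the level sets of $v$ are single simple curves, and $\overline{v}$ is a harmonic conjugate, hence is monotone (strictly) along these curves. The key observation is that $\Theta(x,y)=(\overline{v},v)$ separates points through the pair $(\overline{v},v)$, so two points $z_1\neq z_2$ with $\Theta(z_1)=\Theta(z_2)$ must first have $v(z_1)=v(z_2)=\ell$ for some $\ell>0$ (the value cannot be $0$ since that would force both points onto $\partial\Omega$, outside $\Omega\cap B_1$), and second have $\overline{v}(z_1)=\overline{v}(z_2)$. So the whole problem reduces to showing that $\overline{v}$ is injective when restricted to each level curve $\{v=\ell\}\cap(\Omega\cap B_1)$.

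To carry this out, first I would fix $\ell>0$ with $\{v=\ell\}\cap(\Omega\cap B_1)\neq\emptyset$, which by Lemma \ref{lem:1} is a single simple curve $\Gamma_\ell$ with endpoints on $\partial(\Omega\cap B_1)$. Parametrize $\Gamma_\ell$ by arc length, $t\mapsto\gamma(t)$. The Cauchy--Riemann relation \eqref{form} gives $\nabla\overline v=J\nabla v$, so $\nabla\overline{v}$ is obtained from $\nabla v$ by a rotation of $\pi/2$; in particular $\nabla\overline{v}$ is everywhere tangent to the level curves of $v$ (since $\nabla v$ is normal to them). Because $v$ has no critical points, $|\nabla\overline{v}|=|\nabla v|\neq0$ throughout $\Omega\cap B_1$, so along $\Gamma_\ell$ the tangential derivative $\frac{d}{dt}\overline{v}(\gamma(t))=\nabla\overline{v}\cdot\gamma'(t)=\pm|\nabla\overline{v}|$ never vanishes and keeps a constant sign (the sign cannot flip because $\gamma'$ and $\nabla\overline{v}$ are both continuous nonvanishing fields along the connected curve $\Gamma_\ell$, and they stay parallel or antiparallel). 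Hence $\overline{v}$ is strictly monotone along $\Gamma_\ell$, which yields injectivity of $\overline{v}$ on each level curve and therefore injectivity of $\Theta$.

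I expect the main obstacle to be the rigorous bookkeeping at the endpoints and the ruling out of coincidences \emph{across distinct} level curves, which the above handles only \emph{within} a single level set. The clean separation is that if $\Theta(z_1)=\Theta(z_2)$ then $v(z_1)=v(z_2)$ forces $z_1,z_2$ onto the \emph{same} level curve $\Gamma_\ell$ (there is a unique such curve for each $\ell$ by Lemma \ref{lem:1}), so the across-level issue does not actually arise---the unimodality hypothesis doing exactly this work. The more delicate point is confirming that $\overline v(\gamma(t))$ is \emph{strictly} monotone rather than merely locally so; this is where I would invoke that $\nabla\overline{v}$ is a nonvanishing continuous vector field tangent to the connected simple curve $\Gamma_\ell$, so its oriented angle with $\gamma'$ cannot jump between $0$ and $\pi$, giving a globally constant sign for the tangential derivative. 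With strict monotonicity along each fiber established, global injectivity of $\Theta$ on $\Omega\cap B_1$ follows immediately.
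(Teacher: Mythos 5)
Your proposal is correct and follows essentially the same route as the paper: reduce to two points on a common level curve $\{v=\ell\}$ via Lemma \ref{lem:1}, then show $\overline v$ is strictly monotone along that curve using the Cauchy--Riemann relation and the nonvanishing of $\nabla v$. The paper fixes an orientation and verifies positivity of $\int_{t_1}^{t_2} J\nabla v(\gamma(t))\cdot\gamma'(t)\,dt$ directly, while you argue the tangential derivative keeps a constant sign by continuity --- the same idea in slightly different clothing.
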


\begin{proof}
Let's consider two points $z_1\neq z_2$ in $\Omega\cap B_1$. If $v(z_1)\neq v(z_2)$ we have the desired condition $\Theta(z_1)\neq\Theta(z_2)$. Hence, we can suppose $v(z_1)= v(z_2)$; that is, the two points lie on the same level curve of $v$; that is, $z_1,z_2\in\{v=\ell\}\cap B_1$ for some $\ell>0$. By Lemma \ref{lem:1}, the level set $\{v=\ell\}$ is a simple curve. In particular, it is connected, and there exists a path $\gamma$ in $\{v=\ell\}$ connecting $z_1$ and $z_2$; that is, $\gamma:[t_1,t_2]\to\R^2$ with $\mathrm{supp}\gamma\subset\{v=\ell\}\cap B_1$ and $\gamma(t_1)=z_1$, $\gamma(t_2)=z_2$. The curve is analytic, being a path along the analytic level curve of $v$. We choose the orientation in such a way that a clockwise rotation of angle $\pi/2$ of the tangent vector to the curve $\gamma'(t)$ goes in the same direction of the outer unit normal vector $\nu(\gamma(t))$, i.e. outward with respect to the super-level set $\{v>\ell\}$. Then
\begin{equation*}\label{curveintegral}
\overline v(z_2)-\overline v(z_1)=\int_{t_1}^{t_2}J\nabla v(\gamma(t))\cdot\gamma'(t) \, dt>0.
\end{equation*}
Here, we are using the fact that the gradient of $v$ equals $-|\nabla v|\nu$ and it is nonzero. Notice that $-J\nu(\gamma(t))$ is parallel to $\gamma'(t)$ and goes in the same direction. This means that, restricted to the level curve $\{v=\ell\}$, the first component $\Theta_1(\gamma(t))=\overline v(\gamma(t))$ of the map $\Theta\circ\gamma$ is monotone increasing in $t$, and hence it is injective.
\end{proof}

Let us remark that Lemma \ref{lem:2} together with Lemma \ref{lem:3} imply the inveribility of $\Theta$. Actually, we can conclude that $\Theta$ is a biholomorphism between the open sets $\Omega\cap B_1$ and $\Theta(\Omega\cap B_1)$.

Finally, we want to further localize inside the original domain $\Omega\cap B_1$ in a way that will ensure nice properties for the image domain after composing with $\Theta$. Let's consider a fixed $1/2<r<1$ so that the intersection $\partial B_r\cap\partial\Omega$ consists of two points $\tilde z_1,\tilde z_2$ where both the tangent plane and the normal vector to $\partial\Omega$ are well defined, and $v$ has a well defined nontrivial non-tangential gradient. Notice that it is always possible to find such $r$, since the boundary set where the latter conditions are not satisfied is of zero surface measure. Then, we can localize in a subdomain $E_r$ which contains $\Omega\cap B_{1/2}$ and it is contained in $\Omega\cap B_{1}$. Such a domain is constructed as the portion of the plane between $\partial\Omega$ and a regular curve $\eta$ connecting $\tilde z_1,\tilde z_2$. The curve $\eta$ consists of two small segments near both $\tilde z_1,\tilde z_2$ and globally does not touch neither $\partial B_{1/2}\cap\Omega$, or $\partial B_{1}\cap\Omega$. The segments have directions given respectively by $\nabla v(\tilde z_i)=-|\nabla v(\tilde z_i)|\nu (\tilde z_i)$, for $i=1,2$. This property implies that $\Theta(E_r)$ has orthogonal (and yet vertical) crossing with $\{Y=0\}$ in the new coordinate system $(X,Y)$. Moreover, a portion of the boundary of $\Theta(E_r)$ is contained in $\{Y=0\}$ (one of the two paths connecting $\Theta(\tilde z_1)$ to $\Theta(\tilde z_2)$), i.e. $\Theta$ is locally straightening the boundary $\partial\Omega$.

\subsection{Size control of the critical set}
Let us consider $U=u\circ\Theta^{-1}$, which solves
\begin{equation*}
\begin{cases}
\Delta U=0 &\mathrm{in} \ \Theta(E_r)\\
U=0 &\mathrm{on} \ \partial\Theta(E_r)\cap\{Y=0\}.
\end{cases}
\end{equation*}
Hence, considering the odd reflection $U(X,Y)=-U(X,-Y)$ across $\{Y=0\}$, one ends up with a harmonic function on an open regular set $\mathcal U$ (the reflected $\Theta(E_r)$) for which $\Theta(\partial\Omega)$ is an interior nodal line $\{Y=0\}$. Then, since
\begin{equation*}
|\nabla u|^2=|\mathrm{det} D\Theta| \cdot |\nabla U|^2\circ\Theta,
\end{equation*}
we have
\begin{equation*}\label{Sn-1}
\mathcal H^1(C(u)\cap \overline{E_r})\leq \mathcal H^1(C(\Theta)\cap \overline{E_r})+\mathcal H^1(C(U)\cap \mathcal U)=0.
\end{equation*}
The latter is true due to \eqref{criticalv} and classic size estimates of interior critical sets of harmonic functions. In particular, the interior critical set of the reflected $U$ in $\mathcal U$ consists of a locally finite number of isolated points. The latter easily follows by seeing $U$ as the real part of a holomorphic function $f$ and characterizing its critical points in terms of zeroes of the complex derivative $f'$.

\section{The construction of the conformal mapping in $C^{1,\mathrm{DMO}}$ domains}
The conformal mapping $\Theta$ we consider for the proof of Theorem \ref{t:DMO} is the same we built in the previous section, but enjoys better properties. The full section should be intended as the proof of Theorem \ref{t:DMO}.

\subsection{$C^{1,\mathrm{DMO}}$ domains}\label{s:DMO}
First, let us recall the definition of $C^{1,\mathrm{Dini}}$ domains in $\R^n$ with $n\geq2$. In this case, the local boundary parametrization $\varphi$ is a $C^1$ function and the modulus of continuity of its gradient is a Dini function. This means that locally
\begin{equation}\label{localparame}
\Omega\cap B_1=\{x_n>\varphi(x')\}\cap B_1,\qquad \partial\Omega\cap B_1=\{x_n=\varphi(x')\}\cap B_1,
\end{equation}
with $x=(x',x_n)\in\R^{n-1}\times\R$, $\varphi\in C^1(\overline{B_1'})$ with $B_1'=B_1\cap\{x_n=0\}$, $\varphi(0)=0$, $\nabla_{x'}\varphi(0)=0$. Then, there exist a positive constant and a modulus of continuity $\eta$ such that for all $i=1,...,n-1$ and $x',y'\in B_1'$
$$|\partial_i\varphi(x')-\partial_i\varphi(y')|\leq C\eta(|x'-y'|)$$
with
\begin{equation}\label{DINI}
\int_0^1\frac{\eta(r)}{r}dr<\infty.
\end{equation}

Let us now proceed with the definition of $C^{1,\mathrm{DMO}}$ domains. Here, the boundary of the domain is locally parametrized by a $C^1$ function $\varphi$ whose partial derivatives $\partial_i\varphi$ are of Dini mean oscillations. In other words, the parametrization is as in \eqref{localparame} with $\varphi\in C^1(\overline{B_1'})$, $\varphi(0)=0$, $\nabla_{x'}\varphi(0)=0$ and

\begin{equation*}\label{DMO}
\eta_i(r)=\sup_{x_0\in B_1'}\fint_{B_r(x_0)\cap B_1'} |\partial_i\varphi(x')-\langle\partial_i\varphi\rangle_{x_0,r}|dx',\qquad \mathrm{with \ }\langle\partial_i\varphi\rangle_{x_0,r}=\fint_{B_r(x_0)\cap B_1'}\partial_i\varphi(x') \, dx',
\end{equation*}
is a Dini function for any $i=1,...,n-1$, i.e. satisfies \eqref{DINI}.

\subsection{$C^{1,\mathrm{DMO}}$ domains enjoy properties (P1) and (P2)}
As we remarked in \cite{DonJeoVit23}, after a standard local flattening of the $C^{1,\mathrm{DMO}}$ boundary, the validity of (P1) follows by $C^1$ boundary regularity up to a flat boundary where homogeneous Dirichlet or Neumann boundary conditions are prescribed, for solutions of PDEs with DMO coefficients. We refer to \cite[Proposition 2.7]{DonEscKim18} and \cite[Theorem 1.2]{DonLeeKim20}. Moreover, always after a standard flattening, property (P2) follows by the Hopf Lemma proved in \cite{RenSirSoa23} on flat boundaries and DMO coefficients.
\begin{remark}\label{r:DMO}
We would like to remark here that our result in $C^{1,\mathrm{DMO}}$ domains is not in contradiction with the counterexample in \cite{KenZha23}. In fact, the two dimensional example there is given by a local paramentrization with fails to be $C^{1,\mathrm{Dini}}$ but is convex. As we pointed out in \cite[Proposition 3.1]{DonJeoVit23}, a local $C^{1,\mathrm{DMO}}$ parametrization $\varphi$ which is convex satisfies the $C^{1,\mathrm{Dini}}$-paraboloid condition \cite{ApuNaz19}; that is,
\begin{equation*}
\omega(r)=\sup_{|x|\leq r}\frac{\varphi(x)}{|x|}
\end{equation*}
is a Dini function. The same consideration above explains also why the validity of the Hopf lemma in $C^{1,\mathrm{DMO}}$ domains is not in contradiction with the counterexample in \cite{ApuNaz16}.
\end{remark}

\subsection{Positive harmonic function vanishing on $\partial\Omega$ with no critical points}
The first step is the construction of a solution to \eqref{eqv}. The existence of such a function is done as in Section \ref{s:distance}. However, properties (P1)-(P2) together imply that $v\in C^1$ up to the boundary and that $\partial_\nu v<0$ on $\partial\Omega\cap B_1$, which says, that $v$ has no critical points in $\overline\Omega\cap B_1$.

\subsection{The harmonic conjugate}
Let us construct the harmonic conjugate $\overline v$ of $v$ as in Section \ref{s:conjugate}. Let us remark that $\overline v$ solves
\begin{equation*}
\begin{cases}
\Delta\overline v=0 &\mathrm{in} \ \Omega\cap B_1\\
\partial_\nu \overline v=0 &\mathrm{on} \ \partial\Omega\cap B_1.
\end{cases}
\end{equation*}
Then, by (P1) again, one has $\overline v\in C^1$ up to $\partial\Omega$.

\subsection{A hodograph conformal transformation and its invertibility}
Let us define the hodograph conformal mapping involving $v,\overline v$ as in Section \ref{s:hodograph}, i.e. $\Theta(x,y)=(\overline v(x,y),v(x,y))=(X,Y)$, which is of class $C^{1}$ with $\Theta(0)=0$. Then, since $v$ has no critical points in $\overline\Omega\cap B_1$, the invertibility of the map is even more direct this time, since $|\mathrm{det}D\Theta|>0$ in $\overline\Omega\cap B_1$. Hence, this time $\Theta$ is a $C^1$ local diffeomorphism between $\overline{E_r}$ and $\Theta(\overline{E_r})$. 

\subsection{Size control of the critical set}
Let us consider $U=u\circ\Theta^{-1}$, and consider again the odd reflection across $\{Y=0\}$. One ends up again with a harmonic function in the reflected set $\mathcal U$ for which $\Theta(\partial\Omega)$ is an interior nodal line $\{Y=0\}$. Then, since
\begin{equation*}
|\nabla u|^2=|\mathrm{det} D\Theta| \cdot |\nabla U|^2\circ\Theta,
\end{equation*}
we have
\begin{equation}\label{boundAlm}
\mathcal H^0(C(u)\cap \overline{E_r})\leq \mathcal H^0(C(\Theta)\cap \overline{E_r})+\mathcal H^0(C(U)\cap \mathcal U)<\infty.
\end{equation}
The latter is true due to classic size estimates of interior critical sets for harmonic functions.

\section*{Acknowledgement}
The author would like to thank Xavier Tolsa, Matteo Levi and Susanna Terracini for fruitful conversations on harmonic measure in NTA domains, and Giovanni Alessandrini for a nice correspondence which helped to lower the requirements of Theorem 1.2. The author is a research fellow of Istituto Nazionale di Alta Matematica INDAM group GNAMPA, supported by the GNAMPA projects E5324001950001 PDE ellittiche che degenerano su variet\'a di dimensione bassa e frontiere libere molto sottili, and E53C25002010001 Struttura fine e regolarit\'a in problemi variazionali non-lineari.

\end{document}